\newtheorem{theorem}{Theorem}
\newtheorem{definition}{Definition}
\newtheorem{proposition}{Proposition}
\newtheorem{conjecture}{Conjecture}
\theoremstyle{remark}
\newtheorem{remark}{Remark}
\begin{document}

\title{On a counterexample to a conjecture by Blackadar}
\author{Adam P. W. S{\o}rensen}
\address{Department of Mathematical Sciences\\
University of Copenhagen\\
Universitetsparken 5, DK-2100, Copenhagen \O, Denmark}
\email{apws@math.ku.dk}
\date{\today}

\maketitle

\begin{abstract}
Blackadar conjectured that if we have a split short-exact sequence $0 \to I \to A \to \mathbb{C} \to 0$ where $I$ is semiprojective then $A$ must be semiprojective. 
Eilers and Katsura have found a counterexample to this conjecture.
Presumably Blackadar asked that the extension be split to make it more likely that semiprojectivity of $I$ would imply semiprojectivity of $A$.
But oddly enough, in all the counterexamples of Eilers and Katsura the quotient map from $A$ to $A/I \cong \mathbb{C}$ is split.
We will show how to modify their examples to find a non-semiprojective $C^*$-algebra $B$ with a semiprojective ideal $J$ such that $B/J$ is the complex numbers and the quotient map does not split.
\end{abstract}

\section{Introduction} \label{apws:sec:intro} 

Semiprojectivity is a lifting property for $C^*$-algebras. 
It was introduced in \cite{apws:BlackadarShape} in a successful attempt to transfer some of the power of shape theory for metric spaces to the world of $C^*$-algebras. 

\begin{definition}
A $C^*$-algebra $A$ is \index{semiprojective} semiprojective if whenever we have a $C^*$-algebra $B$ containing an increasing sequence of ideals $J_1 \subseteq J_2 \subseteq \cdots$, and a $*$-homomorphism $\phi \colon A \to B/ \overline{\cup_k J_k} $, we can find an $n \in \mathbb{N}$ and a $*$-homomorphism $\psi \colon A \to B/J_n$ such that 
\begin{equation*}
	\pi_{n,\infty} \circ \psi = \phi,
\end{equation*}
where $\pi_{n,\infty} \colon B/ J_n \twoheadrightarrow B/\overline{\cup_k J_k}$ is the natural quotient map.
\end{definition}

Pictorially, $A$ is semiprojective if we can always fill in the dashed arrow in the following commutative diagram:
\begin{equation*}
\xymatrix{
				& B \ar@{->>}[d] \\
				& B / J_n \ar@{->>}[d] \\
	A \ar[r]_-{\phi} \ar@{-->}[ur]^{\psi}	& B/ \overline{\cup_k J_k}.
}
\end{equation*}

The book \cite{apws:LoringBook} is the canonical source for information about semiprojectivity.
See also the more recent paper \cite{apws:BlackadarSimpleSp}, the beginning of which has an expository nature.  

Many of the main problems about semiprojectivity are concerned with the permanence properties of semiprojective $C^*$-algebras. 
In \cite{apws:BlackadarShape} Blackadar proves that the direct sum of two unital semiprojective $C^*$-algebras is again semiprojective, and that if $A$ is unital and semiprojective then $M_n(A)$ is also semiprojective. 
These results where later extended from unital algebras to $\sigma$-unital algebras, so in particular to all separable algebras, by Loring in \cite{apws:LoringDirectSum}.  
The results are a little stronger, in fact we have for separable algebras that $A \oplus B$ is semiprojective if and only if both $A$ and $B$ are, and a separable unital algebra $D$ is semiprojective if and only if $M_2(D)$ is. 
It is still an open problem if a non-unital $A$ must be semiprojective whenever $M_2(A)$ is.
It is true if $A$ is commutative, see \cite[Corollary 6.9]{apws:ST}. 

For a long time the following conjecture by Blackadar (\cite[Conjecture 4.5]{apws:BlackadarSimpleSp}), which was first asked as a question by Loring in \cite{apws:LoringBook}, was one of the main questions concerning the permanence properties of semiprojective $C^*$-algebras:

\begin{conjecture}[Blackadar] \label{apws:seqConj}
Let 
\begin{equation*}
	0 \to A \to B \to \mathbb{C} \to 0
\end{equation*}
be a split exact sequence of separable $C^*$-algebras. If $A$ is semiprojective then so is $B$. 
\end{conjecture}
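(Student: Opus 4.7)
The plan is to exploit the splitting to decompose any lifting problem for $B$ into a lifting problem for the ideal $A$ and a separate lifting problem for a single projection, and then paste the two partial lifts together. Let $s \colon \mathbb{C} \to B$ be a splitting of the quotient map $q \colon B \to \mathbb{C}$, and set $p = s(1) \in B$, which is a projection. Every element of $B$ decomposes uniquely as $a + \lambda p$ with $a \in A$ and $\lambda \in \mathbb{C}$, and the multiplication on $B$ is completely determined by the algebra structure on $A$ together with the left and right multiplier actions $a \mapsto pa$ and $a \mapsto ap$ of $p$ on the ideal $A$.

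Given the test data --- a $C^*$-algebra $C$, an increasing sequence of ideals $J_1 \subseteq J_2 \subseteq \cdots$, and a $*$-homomorphism $\phi \colon B \to C/\overline{\cup_k J_k}$ --- I would first restrict $\phi$ to $A$ and invoke semiprojectivity of $A$ to obtain an integer $n_0$ and a lift $\psi_A \colon A \to C/J_{n_0}$ of $\phi|_A$. Next, since $\mathbb{C}$ is itself semiprojective (equivalently, projections lift through quotients by countable increasing unions of ideals), I would lift the projection $\phi(p)$ to a projection $P \in C/J_{n_1}$ for some $n_1 \geq n_0$, composing $\psi_A$ with the natural quotient so that both live at the same stage.

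The natural candidate for a combined lift is $\psi(a + \lambda p) := \psi_A(a) + \lambda P$. For $\psi$ to be a $*$-homomorphism, $P$ and $\psi_A$ must satisfy the compatibility relations $P\psi_A(a) = \psi_A(pa)$ and $\psi_A(a)P = \psi_A(ap)$ for every $a \in A$. These identities become true after applying $\pi_{n_1,\infty}$, so the error terms
\begin{equation*}
E_\ell(a) := P\psi_A(a) - \psi_A(pa), \qquad E_r(a) := \psi_A(a)P - \psi_A(ap)
\end{equation*}
lie in the ideal $\overline{\cup_k J_k}/J_{n_1} \subseteq C/J_{n_1}$ for each $a \in A$.

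The hard part is to arrange that these errors vanish exactly in $C/J_m$ for a single $m$. Individually pushing $E_\ell(a)$ and $E_r(a)$ into $J_m/J_{n_1}$ requires an index $m = m(a)$ depending on $a$, and the whole difficulty is making this choice uniform across all of $A$. The strategy would be to use separability of $A$ to reduce to finitely many generators and then perturb $P$ by a small self-adjoint element of $\overline{\cup_k J_k}/J_{n_1}$, re-applying functional calculus to restore projectionhood, so as to kill these finitely many obstructions simultaneously at some finite stage $m$. Producing a single perturbation that cancels all relevant obstructions in one $C/J_m$, rather than only asymptotically in the inductive limit, is the crux of the argument and is precisely the step where the conjecture stands or falls.
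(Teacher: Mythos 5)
Your proposal cannot succeed, and not because of a fixable technical lacuna: the statement you are trying to prove is false. It appears in the paper as Conjecture \ref{apws:seqConj} precisely because it has been refuted --- Eilers and Katsura (Theorem \ref{apws:EKseq}) constructed a split short exact sequence $0 \to A \to B \to \mathbb{C} \to 0$ with $A$ semiprojective and $B$ not semiprojective, and the paper's own main result (Theorem \ref{apws:mainThm}) strengthens this to the non-split case. So there is no proof in the paper to compare your attempt against; the paper's purpose is to produce counterexamples to the statement.

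That said, your analysis locates the obstruction accurately, which is worth spelling out. The decomposition $B = A + \mathbb{C}p$ with $p = s(1)$ is correct, and the compatibility relations $P\psi_A(a) = \psi_A(pa)$ and $\psi_A(a)P = \psi_A(ap)$ are exactly what multiplicativity of the candidate lift $\psi(a + \lambda p) = \psi_A(a) + \lambda P$ requires. Even your reduction to finitely many obstructions is legitimate: the error maps satisfy $E_\ell(ab) = E_\ell(a)\psi_A(b)$ and $E_\ell(a^*) = E_r(a)^*$, so vanishing on a finite generating set propagates to all of $A$, and separable semiprojective $C^*$-algebras are finitely generated. But the final step --- perturbing $P$ so that the finitely many error terms vanish exactly in a single $C/J_m$ --- is not merely ``the crux''; it is impossible in general. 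The Eilers--Katsura example shows that for some test data there is \emph{no} projection $P$ at any finite stage compatible with any lift $\psi_A$, no matter how $P$ is perturbed: if such a $P$ always existed, $B$ would be semiprojective, and it is not. You flagged this step yourself as the point ``where the conjecture stands or falls''; it falls. The partial result of Eilers, Loring and Pedersen cited in the paper's introduction identifies extra hypotheses under which a pasting argument of your type does go through, which is the correct way to salvage the strategy, but without such hypotheses the obstruction you isolated is genuine and fatal.
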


An important partial result was obtained in \cite[Theorem 6.2.1]{apws:ELP}. 
It was used in \cite{apws:ELP} to show that all the so called one-dimensional non-commutativ CW complexes are semiprojective.  
Enders (\cite{apws:Enders}) has proved a form of converse to Conjecture \ref{apws:seqConj}, namely that if $0 \to A \to B \to \mathbb{C} \to 0$ is an exact sequence of separable $C^*$-algebras with $B$ semiprojective then $A$ is semiprojective.  

Recently Eilers and Katsura (\cite{apws:EilersKatsura}) have found a counterexample to Conjecture \ref{apws:seqConj}:

\begin{theorem}[Eilers-Katsura] \label{apws:EKseq}
There exists a split short exact sequence
\begin{equation*}
	0 \to A \to B \to \mathbb{C} \to 0
\end{equation*}
where $A$ is semiprojective but $B$ is not. 
\end{theorem}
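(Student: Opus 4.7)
The strategy would be to construct an explicit semiprojective $A$ together with a projection $p$ in the multiplier algebra of $A$ such that $B := A + \mathbb{C}p$ carries the split extension $0 \to A \to B \to \mathbb{C} \to 0$ (with splitting $\lambda \mapsto \lambda p$) but fails to be semiprojective for a concrete reason. Since many $C^*$-algebras of finite graphs are known to be semiprojective, the natural starting point is to take $A$ in this class and let $p$ correspond to an adjoined vertex-like projection. The first step is therefore to describe $A$ and $B$ by generators and relations clean enough that (i) semiprojectivity of $A$ can be read off from known criteria and (ii) the identification $B/A \cong \mathbb{C}$ and its splitting are transparent.

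The second, harder step is to produce a witness against semiprojectivity of $B$. The plan is to build an auxiliary $C^*$-algebra $C$ with an increasing chain of ideals $J_1 \subseteq J_2 \subseteq \cdots$ and a $*$-homomorphism $\phi \colon B \to C/\overline{\cup_k J_k}$ that admits no lift $\psi \colon B \to C/J_n$. Because $A$ is semiprojective, the restriction $\phi|_A$ can be lifted at some finite stage; the obstruction must therefore be located in the interaction of the new projection $p$ with $A$. One arranges a sequence of elements in $C/J_n$ which approximate an honest projection only in the limit, so that any lift must send $p$ to a genuine projection with a specific position relative to the lifted copy of $A$, and shows that no such position is available at finite stages.

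The main obstacle is calibrating the construction so that these requirements coexist: $A$ must be large enough to support the obstruction yet still satisfy the combinatorial conditions that guarantee semiprojectivity, and $p$ must couple to $A$ tightly enough that its lift is constrained, while loosely enough that the extension actually splits. Eilers and Katsura presumably resolve this balancing act by a careful choice of a specific finite graph, exploiting the failure of K-theoretic or projection-lifting stability at any finite stage of an inductive system, even though each stage separately accommodates the ideal $A$. Verifying both that the chosen $A$ is semiprojective and that the prescribed $\phi$ genuinely has no finite-stage lift is where the concrete technical work lies.
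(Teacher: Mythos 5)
There is a mismatch of expectations here: the paper does not prove this statement at all. Theorem \ref{apws:EKseq} is imported as a black box from the (then in-preparation) paper of Eilers and Katsura \cite{apws:EilersKatsura}; the present paper only \emph{uses} it, combining it via Proposition \ref{apws:constructionNonSplit} and Proposition \ref{apws:spTester} with the non-split sequence of Proposition \ref{apws:myFavoriteSeq}. So your proposal cannot be compared to a proof in the paper --- it can only be judged on its own terms, and on those terms it is not a proof but a strategy outline. You never exhibit a concrete algebra $A$, never verify its semiprojectivity against any stated criterion, and the decisive negative step --- a specific $C^*$-algebra $C$ with an increasing chain of ideals and a specific $\phi \colon B \to C/\overline{\cup_k J_k}$ admitting no finite-stage lift --- is described only in the conditional mood (``one arranges a sequence of elements \ldots''), with the admission that verifying it ``is where the concrete technical work lies.'' That verification \emph{is} the theorem; everything before it is scaffolding.

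The gap is also sharper than your sketch acknowledges. Since $A$ is semiprojective by hypothesis and $\mathbb{C}$ is semiprojective (a single projection always lifts at a finite stage), both ``halves'' of $B = A + \mathbb{C}p$ lift separately; any obstruction must live entirely in the \emph{joint} lifting, i.e.\ in simultaneously realizing the lifted projection in the correct position relative to the lifted copy of $A$. Your sketch gestures at this (``the interaction of the new projection $p$ with $A$'') but identifies no mechanism that forces the joint lift to fail --- and this is exactly where the Eilers--Katsura construction does its real work, via the failure of lifting \emph{properly infinite} projections with prescribed $K$-theoretic constraints in graph $C^*$-algebras (the title of \cite{apws:EilersKatsura} points at precisely this phenomenon). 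Your intuition about the general shape --- graph-algebra techniques, an adjoined projection, a split quotient onto $\mathbb{C}$ --- is consistent with the paper's remark that the Eilers--Katsura techniques come from graph $C^*$-algebras and only produce split sequences, but without a concrete graph, a concrete inductive system, and a concrete obstruction computation, no step of the proposal is checkable.
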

 
The techniques used by Eilers and Katsura comes from the world of graph $C^*$-algebra, and so only leads to split short exact sequence.
Their work leaves open the question of whether there is a non-split short exact sequence $0 \to A \to B \to \mathbb{C} \to 0$ with $A$ semiprojective and $B$ not semiprojective. 
In light of Eilers and Katsura's result we certainly expect such a sequence to exist, and indeed, as we shall see in Theorem \ref{apws:mainThm}, it does.  

This note is structured as follows: In Section \ref{apws:sec:tool} we prove two propositions that will be our main tools, in Section \ref{apws:sec:counter} we prove the main theorem. 

\section{Toolbox} \label{apws:sec:tool}

We will be working with pullbacks. 
Given two $*$-homomorphisms $\phi \colon A \to D$, $\psi \colon B \to D$, we write, by standard abuse of notation, the pullback of $A$ and $B$ taken over $\phi$ and $\psi$ as $A \oplus_D B$. 
That is $A \oplus_D B = \{ (a,b) \in A \oplus B \mid \phi(a) = \psi(b) \}$. 
The pullback is universal for $*$-homomorphisms into $A$ and $B$ that agree after compositions with $\phi$ and $\psi$.
For a detailed account of the theory of pullbacks (and pushouts) see \cite{apws:pushpull}. 

Our first tool will let us produce new short exact sequences from old ones. 
In particular it gives us a way to alter a split short exact sequence to make it non-split. 

\begin{proposition} \label{apws:constructionNonSplit}
Suppose we are given two short exact sequence  
\begin{equation} \label{apws:acrossSeq}
	0 \to I \to A \stackrel{\pi}{\to} \mathbb{C} \to 0,
\end{equation}
and 
\begin{equation} \label{apws:downSeq}
	0 \to J \to B \stackrel{\rho}{\to} \mathbb{C} \to 0.
\end{equation}
Let $P$ be the pullback of $A$ and $B$ taken over $\pi$ and $\rho$. 
Then the following three sequences are short exact:
\begin{eqnarray}
	0 \to I \oplus J \to &P& \to \mathbb{C} \to 0, \label{apws:pullSeq} \\
	0 \to I \to &P& \to A \to 0, \quad \mathrm{ and,} \label{apws:newDownSeq}\\
	0 \to J \to &P& \to B \to 0. \label{apws:newAcrossSeq}
\end{eqnarray} 
Moreover (\ref{apws:pullSeq}) splits if and only if both (\ref{apws:acrossSeq}) and (\ref{apws:downSeq}) splits. 
\end{proposition}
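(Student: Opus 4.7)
The plan is to verify each exactness claim directly from the explicit description $P = \{(a,b) \in A \oplus B : \pi(a) = \rho(b)\}$, and then to deduce the splitting statement from the universal property of the pullback.

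First I would handle (\ref{apws:pullSeq}). The rule $(i,j) \mapsto (i,j)$ embeds $I \oplus J$ into $P$ because $\pi(i) = 0 = \rho(j)$, and $(a,b) \mapsto \pi(a) = \rho(b)$ gives a well-defined surjection $P \twoheadrightarrow \mathbb{C}$ whose kernel is exactly the image of $I \oplus J$. Exactness at each spot is then immediate.

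For (\ref{apws:newDownSeq}) and (\ref{apws:newAcrossSeq}) I would invoke the standard fact that the pullback of a surjection is a surjection: because $\rho$ is surjective, given any $a \in A$ one can choose $b \in B$ with $\rho(b) = \pi(a)$, so the coordinate projection $p_A : P \to A$ is surjective, and its kernel $\{(0,b) : \rho(b) = 0\}$ is canonically identified with $J$. The symmetric argument handles $p_B : P \to B$, whose kernel is canonically isomorphic to $I$. Up to the obvious identification of these kernels, this gives both of the desired short exact sequences.

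For the splitting equivalence I would argue in both directions. Assuming (\ref{apws:acrossSeq}) and (\ref{apws:downSeq}) split via $s_A : \mathbb{C} \to A$ and $s_B : \mathbb{C} \to B$, the relation $\pi \circ s_A = \mathrm{id}_\mathbb{C} = \rho \circ s_B$ lets the universal property of $P$ produce a unique $*$-homomorphism $s : \mathbb{C} \to P$ with $p_A \circ s = s_A$ and $p_B \circ s = s_B$, given concretely by $s(\lambda) = (s_A(\lambda), s_B(\lambda))$, and this $s$ manifestly splits (\ref{apws:pullSeq}). Conversely, any splitting $s$ of (\ref{apws:pullSeq}) yields splittings of (\ref{apws:acrossSeq}) and (\ref{apws:downSeq}) upon postcomposition with $p_A$ and $p_B$. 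The entire proof is a routine diagram chase with no genuine obstacle; the only real work is bookkeeping of which map goes where, which is streamlined by writing everything in terms of the explicit pullback model.
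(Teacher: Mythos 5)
Your proof is correct and takes essentially the same route as the paper's: exactness is checked directly in the explicit model $P=\{(a,b) : \pi(a)=\rho(b)\}$, and the splitting equivalence follows from the universal property of the pullback in one direction and postcomposition with the coordinate projections in the other. One point in your favor: your kernel bookkeeping ($\ker p_A \cong J$ and $\ker p_B \cong I$) is the mathematically correct one, whereas the paper's printed sequences (\ref{apws:newDownSeq})--(\ref{apws:newAcrossSeq}) and its proof have $I$ and $J$ swapped at this point --- the version actually invoked later, namely $0 \to J \to B \to A \to 0$ in the proof of Theorem \ref{apws:mainThm}, agrees with your computation.
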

\begin{proof}
We begin by proving that (\ref{apws:newDownSeq}) is exact. 
The map from $P$ to $A$ is simply projection onto the first coordinate, which is a surjection since both $\pi$ and $\rho$ are surjections.  
The kernel consists of pairs $(a,b) \in P$ with $a = 0$, that is pairs $(0,b)$ where $\rho(b) = 0$.
Hence the kernel is $0 \oplus I \cong I$.
A similar argument shows that (\ref{apws:newAcrossSeq}) is exact. 

We now consider (\ref{apws:pullSeq}). 
The map from $P$ to $\mathbb{C}$ takes a pair $(a,b)$ and sends it to $\pi(a) (= \rho(b))$. 
By the surjectivity of $\pi$ and $\rho$ we see that this is indeed a surjection. 
The kernel of this map is pairs $(a,b) \in P$ such that $\pi(a) = 0 = \rho(b)$, which is exactly $I \oplus J$.

The universal property of the pullback ensures that if (\ref{apws:acrossSeq}) and (\ref{apws:downSeq}) both split then (\ref{apws:pullSeq}) splits. 
On the other hand if we have a splitting from $\mathbb{C}$ to $P$, then simply composing that with the coordinate projections will show that (\ref{apws:acrossSeq}) and (\ref{apws:downSeq}) both split.
\end{proof}

\begin{remark}
In the form of a diagram we have shown that if we are given sequences (\ref{apws:acrossSeq}) and (\ref{apws:downSeq}) as in the above proposition, then the following diagram commutes and has exact rows, columns and diagonal.
\[
	\xymatrix{
	0 \ar[dr] & & 0 \ar[d] & 0 \ar[d] & \\
	 & I \oplus J \ar[dr] & J \ar[d] \ar@{=}[r] & J \ar[d] & \\
	0 \ar[r] & I \ar@{=}[d] \ar[r] & P \ar[d] \ar[dr] \ar[r] & B \ar[d] \ar[r] & 0 \\
	0 \ar[r] & I \ar[r] & A \ar[d] \ar[r] & \mathbb{C} \ar[d] \ar[dr] \ar[r] & 0 \\
	& & 0 & 0 & 0
	}
\]
\end{remark}

Now that we have a tool to construct non-split extensions from a split and a non-split one, we need a tool to tell us if the new extension is semiprojective. 
The following proposition is very slight generalization of \cite[Proposition 5.19]{apws:Neubuser} (where the ideal has to be the stabilization of a unital $C^*$-algebra). 
The proofs are essentially identical, but since \cite{apws:Neubuser} is in German, we include a short proof.

\begin{proposition} \label{apws:spTester}
Consider a short exact sequence 
\begin{equation*}
	0 \to I \to A \stackrel{\rho}{\to} Q \to 0.
\end{equation*}
If $I$ is generated as an ideal by finitely many projections and $A$ is semiprojective then $Q$ is semiprojective. 
\end{proposition}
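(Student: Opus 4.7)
The plan is to verify semiprojectivity of $Q$ directly from the definition by reducing to the hypothesis on $A$. Suppose $B$ is a $C^*$-algebra with an increasing chain of ideals $J_1 \subseteq J_2 \subseteq \cdots$ and $\phi \colon Q \to B/\overline{\cup_k J_k}$ is a $*$-homomorphism. Precomposing with $\rho$ gives $\phi \circ \rho \colon A \to B/\overline{\cup_k J_k}$, which by semiprojectivity of $A$ lifts to some $\psi \colon A \to B/J_n$. The whole problem then reduces to pushing $n$ up to some $m \geq n$ so that $\pi_{n,m} \circ \psi$ vanishes on $I$, since in that case $\pi_{n,m} \circ \psi$ descends through $\rho$ to the desired map $\tilde\psi \colon Q \to B/J_m$.

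Let $p_1, \ldots, p_\ell$ be projections generating $I$ as an ideal. Since $\phi \circ \rho$ kills each $p_i$ and equals $\pi_{n,\infty} \circ \psi$, each $\psi(p_i)$ is a projection in $B/J_n$ lying in the norm-closure of $\bigcup_{k \geq n} J_k/J_n$. The single technical fact I need is: \emph{a projection in the closure of an increasing union of ideals already belongs to one of them}. Granted this, I can pick indices $m_i$ with $\psi(p_i) \in J_{m_i}/J_n$, take $m = \max_i m_i$ (using finiteness of the generating set here crucially), and conclude that $\pi_{n,m} \circ \psi$ kills every $p_i$ and therefore the ideal $I$ they generate. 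A routine diagram chase using surjectivity of $\rho$ then gives $\pi_{m,\infty} \circ \tilde\psi = \phi$.

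The only place where genuine work is required is the lemma on projections in an ideal-closure, so let me indicate how I would handle it. Write $p := \psi(p_i)$ and choose a self-adjoint $x \in J_m/J_n$ (for some $m$ large enough) with $\|x - p\| < 1/4$; apply continuous functional calculus with a cut-off $f$ that is $0$ near $0$ and $1$ near $1$ to obtain a projection $e = f(x)$ which still lies in $J_m/J_n$ and is within distance strictly less than $1$ of $p$. By the standard fact that two projections at distance less than $1$ are conjugate by a unitary in the unitization of the ambient $C^*$-algebra, and because ideals are invariant under conjugation by such unitaries, $p$ itself lies in $J_m/J_n$. With that lemma in hand the proof closes with no further obstacles.
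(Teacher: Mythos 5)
Your proposal is correct and follows the paper's proof essentially verbatim: lift $\phi \circ \rho$ using semiprojectivity of $A$, push finitely many steps up the tower of ideals to kill the finitely many generating projections, and descend through $\rho$. The only difference is that where the paper cites a lemma of Blackadar for the key fact that a projection vanishing in $B/\overline{\cup_k J_k}$ already vanishes in some $B/J_l$, you supply a correct self-contained proof of it via functional calculus and unitary equivalence of close projections.
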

\begin{proof}
Suppose we are given $B$, an increasing sequence of ideals $(J_k)$ in $B$, and a $*$-homomorphism $\phi \colon Q \to B / J$, where  $J = \overline{\cup_k J_k}$.
For all $k \in \mathbb{N}$, we let $\pi_{k,\infty} \colon B/J_k \to B/J$ be the natural quotient map.  
By the semiprojectivity of $A$ we can find and $n \in \mathbb{N}$ and a $*$-homomorphism $\psi \colon A \to B/J_n$ such that $\pi_{n,\infty} \circ \psi = \phi \circ \rho$. 

Let $p_1, p_2, \ldots, p_m$ be projections that generate $I$. 
For all $i$ we have $\rho(p_i) = 0$, and therefore we have $(\pi_{n,\infty} \circ \psi)(p_i) = 0$.
Hence, we can use \cite[Lemma 2.13]{apws:BlackadarShape} to deduce that there must be some $l \geq n$ such that $(\pi_{n,l} \circ \psi)(p_i) = 0$ for all $i=1,2,\ldots,m$. 
Since the $p_i$ generate $I$, we then have $(\pi_{m,k} \circ \psi)(I) = 0$, so $\pi_{n,l} \circ \psi$ drops to a $*$-homomorphism $\bar{\psi} \colon Q \to B/J_l$ with 
$\pi_{l,\infty} \circ \bar{\psi} = \phi$.
Thus $\bar{\psi}$ and $l$ combine to show that $Q$ is semiprojective. 
\end{proof}

Our strategy is now the following: Find a non-split short exact sequence 
\begin{equation*}
	0 \to J \to B \to \mathbb{C} \to 0,
\end{equation*}
such that $J$ has a full projection. 
We will then use the construction in Proposition \ref{apws:constructionNonSplit} on that and the Eilers-Katsura example, to produce a new non-split extension, which we can show, using Proposition \ref{apws:spTester}, has the desired properties. 

\section{Constructing a counterexample} \label{apws:sec:counter}

We begin this section by constructing a non-split short exact sequence where the ideal is semiprojective and contains a full projection, and the quotient is the complex numbers. 
To prove that the constructed sequence is non-split we will use $K$-theory. 
In particular, we will show that one of the boundary maps in the six-term exact sequence is non-zero.
Since $K_1(\mathbb{C}) = 0$, we need a semiprojective $C^*$-algebra with non-zero $K_1$-group. 
We will use a Kirchberg algebra. 

\begin{definition}
A separable, simple, nuclear, purely infinite $C^*$-algebras is called a \index{Kirchberg algebra} Kirchberg algebra. If it also satisfies the universal coefficient theorem, we call it a UCT Kirchberg algebra. 
\end{definition}

\begin{definition}
Denote by $\mathcal{P}_\infty$ the unital UCT Kirchberg algebra with $K_0(\mathcal{P}_\infty) = 0$ and $K_1(\mathcal{P}_\infty) = \mathbb{Z}$.
\end{definition}

Building on the work of Blackadar (\cite{apws:BlackadarSimpleSp}) and Szymanski (\cite{apws:Szymanski}), Spielberg has shown in \cite[Theorem 3.12]{apws:Spielberg} that any Kirchberg algebra with finitely generated $K$-theory and torsion free $K_1$-group is semiprojective.
In particular we have:  

\begin{theorem}[Spielberg] \label{apws:spielbergsGift}
Let $\mathbb{K}$ denote the algebra of compact operators. 
The Kirchberg algebra $\mathcal{P}_\infty \otimes \mathbb{K}$ is semiprojective. 
\end{theorem}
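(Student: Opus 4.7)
The plan is to verify that $\mathcal{P}_\infty \otimes \mathbb{K}$ satisfies the hypotheses of the Spielberg result just recalled---namely that it is a Kirchberg algebra with finitely generated $K$-theory and torsion-free $K_1$-group---and then invoke that theorem. Since Spielberg's theorem does all the real work, the remaining task is routine verification of stability properties.

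First I would confirm that $\mathcal{P}_\infty \otimes \mathbb{K}$ is a Kirchberg algebra. Separability and nuclearity pass to the tensor product with $\mathbb{K}$, which is itself separable and nuclear. Simplicity follows because both tensor factors are simple and one of them is nuclear, so the (minimal, hence also maximal) tensor product is simple. Pure infiniteness is preserved under stabilisation of a purely infinite simple $C^*$-algebra. Thus $\mathcal{P}_\infty \otimes \mathbb{K}$ is a Kirchberg algebra.

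Next I would compute its $K$-theory. By stability of $K$-theory,
\begin{equation*}
K_0(\mathcal{P}_\infty \otimes \mathbb{K}) \cong K_0(\mathcal{P}_\infty) = 0, \qquad K_1(\mathcal{P}_\infty \otimes \mathbb{K}) \cong K_1(\mathcal{P}_\infty) = \mathbb{Z},
\end{equation*}
so both $K$-groups are finitely generated and $K_1$ is torsion-free. Applying the Spielberg theorem stated above then yields semiprojectivity of $\mathcal{P}_\infty \otimes \mathbb{K}$.

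There is no substantive obstacle in this argument; the depth of the result is entirely contained in the Spielberg theorem being invoked. If anything delicate appeared in the write-up I would expect it to be the verification that each of the four Kirchberg-algebra properties is preserved under tensoring with $\mathbb{K}$, but each of those is a standard fact and requires no clever argument.
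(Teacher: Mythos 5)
Your proposal is correct and matches the paper exactly: the paper states this result as an immediate consequence of Spielberg's theorem that Kirchberg algebras with finitely generated $K$-theory and torsion-free $K_1$ are semiprojective, offering no further argument. Your filling-in of the routine verifications (that the Kirchberg-algebra properties and the $K$-groups are preserved under tensoring with $\mathbb{K}$) is accurate and is precisely what the paper leaves implicit with its ``In particular we have''.
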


We can now construct a non-split sequence with a semiprojective ideal that contains a full projection. 

\begin{proposition} \label{apws:myFavoriteSeq}
There exists a non-split short exact sequence
\[
	0 \to J \to E \to \mathbb{C} \to 0,
\]
where $J$ is semiprojective and contains a full projection. 
\end{proposition}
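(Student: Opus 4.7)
The plan is to take $J = \mathcal{P}_\infty \otimes \mathbb{K}$, which is semiprojective by Theorem \ref{apws:spielbergsGift}. This $J$ is simple (the tensor of a simple algebra with $\mathbb{K}$), so any non-zero projection is full; concretely, $1_{\mathcal{P}_\infty} \otimes e$ for any rank-one projection $e \in \mathbb{K}$ gives a full projection in $J$. All that remains is to exhibit a non-split extension $0 \to J \to E \to \mathbb{C} \to 0$, which I would do by pulling back the multiplier-corona extension along a unital embedding $\mathbb{C} \hookrightarrow \mathcal{Q}(J)$ chosen so that the six-term $K$-theory boundary map is non-zero.

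Write $\mathcal{M} = \mathcal{M}(J)$ and $\mathcal{Q} = \mathcal{M}/J$, and let $\pi \colon \mathcal{M} \to \mathcal{Q}$ be the quotient. Because $J$ is stable and $\sigma$-unital, $\mathcal{M}$ contains isometries $s_1,s_2$ with $s_1 s_1^* + s_2 s_2^* = 1_\mathcal{M}$; a standard Eilenberg swindle then gives $K_*(\mathcal{M}) = 0$. The six-term exact sequence associated to $0 \to J \to \mathcal{M} \to \mathcal{Q} \to 0$ therefore forces the boundary map $\partial^{\mathcal{Q}} \colon K_0(\mathcal{Q}) \to K_1(J) = K_1(\mathcal{P}_\infty) = \mathbb{Z}$ to be an isomorphism. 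Since $\mathcal{Q}$ inherits $s_1, s_2$ it is unital and properly infinite, so every $K_0$-class is represented by an honest projection; pick a projection $P \in \mathcal{Q}$ with $\partial^{\mathcal{Q}}[P]$ a generator of $\mathbb{Z}$. Now set $E = \pi^{-1}(\mathbb{C} \cdot P) \subseteq \mathcal{M}$, where $\mathbb{C} \cdot P \cong \mathbb{C}$ is the copy of the scalars in $\mathcal{Q}$ determined by $\lambda \mapsto \lambda P$. This manifestly yields a short exact sequence $0 \to J \to E \to \mathbb{C} \to 0$.

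To see that this sequence does not split, apply naturality of the boundary map to the commutative square produced by including the triple $(J,E,\mathbb{C})$ into $(J,\mathcal{M},\mathcal{Q})$: the generator $[1_\mathbb{C}] \in K_0(\mathbb{C})$ is sent to $\partial^{\mathcal{Q}}[P]$, which is a generator of $\mathbb{Z}$ and in particular non-zero. A split extension necessarily has vanishing boundary (the section splits the six-term sequence), so our extension is non-split. The main obstacle is producing the projection $P$ with a non-zero boundary; this rests on $K_*(\mathcal{M}(J)) = 0$ and on $\mathcal{Q}(J)$ being properly infinite, both standard consequences of stability of $J$. After that point the construction and the non-splitness argument are formal consequences of naturality of the index map.
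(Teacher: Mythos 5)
Your construction is essentially the paper's: the same $J = \mathcal{P}_\infty \otimes \mathbb{K}$ (your fullness argument via simplicity is fine, the paper just observes that a stabilization of a unital algebra has a full projection), the same extension $E$ obtained from the multiplier--corona extension along $\lambda \mapsto \lambda P$ --- your $E = \pi^{-1}(\mathbb{C}\cdot P)$ is canonically isomorphic to the pullback $M(J) \oplus_{M(J)/J} \mathbb{C}$ used in the paper --- and the same non-splitting argument via naturality of the index map plus split-exactness of $K$-theory. Where you genuinely diverge is in producing the projection $P$. The paper invokes Lin--Zhang to see that the corona algebra has continuous scale, then Lin's theorem to conclude it is simple and purely infinite, and then cites Blackadar (Corollary 6.11.8) to realize the generator of $K_0$ by a projection. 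You instead use only that $M(J)/J$ is properly infinite (it inherits the isometries from $M(J)$) together with the standard fact, going back to Cuntz, that in a properly infinite unital $C^*$-algebra every $K_0$-class contains a projection. Your route is leaner: it avoids the corona simplicity machinery entirely, at the cost of concluding only that $\delta[1_\mathbb{C}] = \partial^{\mathcal{Q}}[P]$ is non-zero rather than the paper's slightly stronger statement that $\delta$ is an isomorphism --- but non-vanishing is all the proposition needs. Likewise, where the paper cites Blackadar (Proposition 12.2.1) for $\eta$ being an isomorphism, you re-derive that fact from $K_*(M(J)) = 0$.

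One caveat on that last step: as literally written, your justification of $K_*(M(J)) = 0$ does not work. Two isometries with $s_1 s_1^* + s_2 s_2^* = 1$ give only proper infiniteness, and properly infinite unital algebras need not have trivial $K$-theory ($\mathcal{O}_3$ has $K_0 = \mathbb{Z}/2$, the Cuntz--Toeplitz algebra has $K_0 = \mathbb{Z}$). The genuine Eilenberg swindle requires a sequence of isometries $(t_n)$ whose range projections sum strictly to $1$, so that the infinite repeat $x \mapsto \sum_n t_n x t_n^*$ is a well-defined endomorphism of $M(J)$; this is where stability and $\sigma$-unitality of $J$ enter, and it does not follow from the two isometries alone. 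With that correction --- which is exactly the content of the paper's citation of Blackadar's Proposition 12.2.1 --- your proof is complete.
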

\begin{proof}
Put $J = \mathcal{P}_\infty \otimes \mathcal{K}$, as the stabilization of a unital algebra $J$ contains a full projection.
By Theorem \ref{apws:spielbergsGift}, it is semiprojective. 
We will pick $E$ such that the boundary map in $K$-theory from $K_0(\mathbb{C})$ to $K_1(J)$ is non-zero.
Since $K$-theory is split exact this implies that the sequence does not split. 

We have the following short exact sequence:
\[
	0 \to J \to M(J) \to M(J)/J \to 0.
\]
If we let $\eta \colon K_0(M(J)/J) \to K_1(J)$ be the boundary map in the six-term exact sequence arising from the above extension, then by \cite[Proposition 12.2.1]{apws:BlackadarKBook} $\eta$ is an isomorphism. 
In particular 
\[
	K_0(M(J)/J) \cong K_1(J) \cong K_1(\mathcal{P}_\infty) = \mathbb{Z}. 
\]
By \cite[Theorem 2.2]{apws:LinZhang}, the corona algebra $M(J)/J$ has a continuous scale and so by \cite[Theorem 3.2]{apws:LinSimpleCorona} it is simple and purely infinite. 
Since $M(J)/J$ is also unital there is, by \cite[Corollary 6.11.8]{apws:BlackadarKBook}, a projection $p \in M(J)/J$ such that the class of $p$ in $K_0(M(J)/J)$ is $1 \in \mathbb{Z}$. 
Define a $*$-homomorphism $\tau \colon \mathbb{C} \to M(J)/J$ by $\tau(\lambda) = \lambda p$, and notice that $K_0(\tau)$ is an isomorphism of groups. 

Let $E = M(J) \oplus_{M(J)/J} \mathbb{C}$ where the pullback is taken over the quotient map from the multiplier algebra to the corona algebra and $\tau$.
We have the following commutative diagram which has exact rows (see \cite[Proposition 3.2.9]{apws:WeggeOlsenBook}):
\[
	\xymatrix{
		0 \ar[r] & J \ar[r] \ar@{=}[d]	& E \ar[r] \ar[d]	& \mathbb{C} \ar[r] \ar[d]^{\tau} & 0 \\
		0 \ar[r] & J \ar[r]				& M(J) \ar[r]		& M(J) / J \ar[r]			& 0
	}
\]   

Let $\delta$ denote the boundary map from $K_0(\mathbb{C})$ to $K_1(J)$ in the six-term exact sequence associated to the short exact sequence on top.
By \cite[Proposition 12.2.1]{apws:RordamKBook} the following square commutes:
\[
	\xymatrix{
		K_0(\mathbb{C}) \ar[r]^-\delta \ar[d]_{K_0(\tau)}	& K_1(J) \ar@{=}[d] \\
		K_0(M(J)/J) \ar[r]_-\eta						& K_1(J) 
	}
\]
Since $\eta$ and $K_0(\tau)$ are isomorphisms, we must have that $\delta$ is an isomorphism. 
In particular $\delta$ is non-zero, so the sequence 
\[
	0 \to J \to E \to \mathbb{C} \to 0
\]
does not split.
\end{proof}

We can now prove our main theorem.

\begin{theorem} \label{apws:mainThm}
There exists a non-split short exact sequence
\begin{equation*}
	0 \to K \to B \to \mathbb{C} \to 0,
\end{equation*}
such that $K$ is semiprojective but $B$ is not. 
\end{theorem}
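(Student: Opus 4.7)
The strategy, sketched at the end of Section \ref{apws:sec:tool}, is to combine the Eilers-Katsura example with the sequence from Proposition \ref{apws:myFavoriteSeq} via the pullback construction. Write the Eilers-Katsura sequence from Theorem \ref{apws:EKseq} as $0 \to A \to C \to \mathbb{C} \to 0$, so that $A$ is semiprojective, $C$ is not, and the sequence splits. Write the sequence from Proposition \ref{apws:myFavoriteSeq} as $0 \to J \to E \to \mathbb{C} \to 0$, so that $J$ is semiprojective, contains a full projection $p$, and the sequence does not split. I would then define $B$ to be the pullback $C \oplus_{\mathbb{C}} E$ taken over the two quotient maps onto $\mathbb{C}$.

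Next, apply Proposition \ref{apws:constructionNonSplit}. Setting $K = A \oplus J$, the first of the three resulting sequences is
\[
0 \to K \to B \to \mathbb{C} \to 0,
\]
which is non-split by the final clause of Proposition \ref{apws:constructionNonSplit}, since the $E$-sequence is non-split. Moreover $K$ is semiprojective: $A$ is semiprojective by the Eilers-Katsura theorem and $J$ is semiprojective by Theorem \ref{apws:spielbergsGift}, so by Loring's theorem on direct sums (\cite{apws:LoringDirectSum}) the direct sum $A \oplus J$ is semiprojective.

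The remaining task is to show that $B$ itself is not semiprojective, and for this I would use the third sequence from Proposition \ref{apws:constructionNonSplit},
\[
0 \to J \to B \to C \to 0.
\]
Since $J$ contains the full projection $p$, it is generated as an ideal by a single projection. If $B$ were semiprojective, Proposition \ref{apws:spTester} would then force $C$ to be semiprojective, contradicting Theorem \ref{apws:EKseq}. Hence $B$ is not semiprojective, and the pair $(K,B)$ is the desired counterexample. The main conceptual obstacle has already been overcome in Proposition \ref{apws:myFavoriteSeq} (the $K$-theoretic construction of a non-split extension of $\mathbb{C}$ by a stable Kirchberg algebra); at this stage the proof of the theorem is purely an assembly of the machinery in Section \ref{apws:sec:tool}.
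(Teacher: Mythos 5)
Your proposal is correct and coincides with the paper's own proof up to relabeling (your $C$ is the paper's non-semiprojective middle algebra $A$, your $A$ its semiprojective ideal $I$): you form the same pullback $B$, extract non-splitness of $0 \to K \to B \to \mathbb{C} \to 0$ from the final clause of Proposition \ref{apws:constructionNonSplit}, rule out semiprojectivity of $B$ by applying Proposition \ref{apws:spTester} to $0 \to J \to B \to C \to 0$ via the full projection in $J$, and invoke Loring's direct-sum theorem for $K = A \oplus J$. Nothing is missing; this is exactly the assembly the paper carries out.
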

\begin{proof}
Let 
\begin{equation*}
	0 \to I \to A \stackrel{\pi}{\to} \mathbb{C} \to 0
\end{equation*}
be a short exact sequence such that $I$ is separable and semiprojective but $A$ is not semiprojective, e.g. one of the extensions constructed by Eilers and Katsura (Theorem \ref{apws:EKseq}), and let 
\begin{equation}
	0 \to J \to E \stackrel{\rho}{\to} \mathbb{C} \to 0 \label{apws:helperSeq}
\end{equation}
be the non-split extension constructed in Proposition \ref{apws:myFavoriteSeq}. 

Put $B = A \oplus_\mathbb{C} E$ where the pullback is taken over $\pi$ and $\rho$.
By Proposition \ref{apws:constructionNonSplit} we have the following two short exact sequence:
\begin{eqnarray}
	0 \to I \oplus J \to &B& \to \mathbb{C} \to 0, \quad \mathrm{ and, } \\ \label{apws:theCounterExample}
	0 \to J \to &B& \to A \to 0. \label{apws:notSpSeq}
\end{eqnarray}
Furthermore (\ref{apws:theCounterExample}) does not split as (\ref{apws:helperSeq}) does not split. 

Since $J$ has a full projection and $A$ is not semiprojective Proposition \ref{apws:spTester} applied to (\ref{apws:notSpSeq}) gives us that $B$ is not semiprojective.
To complete the proof we put $K = I \oplus J$ and notice that $K$ is semiprojective, as it is the sum of two separable semiprojective $C^*$-algebras (\cite[Theorem 4.2]{apws:LoringDirectSum}). 
\end{proof}

\section{Acknowledgments}

The author thanks Takeshi Katsura and S{\o}ren Eilers for bringing the problem to his attention.
S{\o}ren Eilers is also thanked for giving discussions concerning the problem. 
This research was supported by the Danish National Research Foundation (DNRF) through the Centre for Symmetry and Deformation.

\end{document}